\newtheorem{theorem}{Theorem}
\newtheorem{prop}[theorem]{Proposition}
\newtheorem{lemma}[theorem]{Lemma}
\newcommand{\N}{\mathbb{N}}
\newcommand{\R}{\mathbb{R}}
\title{Iterated Point-Line Configurations Grow Doubly-Exponentially}
\author{Joshua Cooper and Mark Walters}
\begin{document}

\maketitle
\begin{abstract}
Begin with a set of four points in the real plane in general position.  Add to this collection the intersection of all lines through pairs of these points.  Iterate.  Ismailescu and Radoi\v{c}i\'{c} (2003) showed that the limiting set is dense in the plane.  We give doubly exponential upper and lower bounds on the number of points at each stage.  The proof employs a variant of the Szemer\'edi-Trotter Theorem and an analysis of the ``minimum degree'' of the growing configuration.
\end{abstract}

Consider the iterative process of constructing points and lines in the real plane given by the following:  begin with a set of points $P_1 = \{p_1,p_2,p_3,p_4\}$ in the real plane in general position.  For each pair of points, construct the line passing through the pair.  This will create a set of lines $L_1 = \{\ell_1,\ell_2,\ell_3,\ell_4,\ell_5,\ell_6\}$.  Some of these constructed lines will intersect at points in the plane that do not belong to the set $P_1$.  Add any such point to the set $P_1$ to get a new set $P_2$.  Now, note that there exist some pairs of points in $P_2$ that do not lie on a line in $L_1$, namely some elements of $P_2 \setminus P_1$.  Add these missing lines to the set $L_1$ to get a new set $L_2$.  Iterate in this manner, adding points to $P_k$ followed by adding lines to $L_k$.  We assume that the original configuration is such that for every $k \in \N$ no two lines in $L_k$ are parallel.

Now we introduce some notation for this iterative process.  The $k^\textrm{th}$ \textit{stage} is defined to consist of these two ordered steps:
\begin{enumerate}
\item Add each intersection of pairs of elements of $L_k$ to $P_{k+1}$, and
\item Add a line through each of pair of elements of $P_k$ to $L_{k+1}$.
\end{enumerate}

Under this definition, we say that stage 1 begins with the configuration of four points with six lines and stage $k$ begins with $n_k$ points with $m_k$ lines.  We will denote the set of points at the beginning of stage $k$ by $P_k$ and likewise the set of lines at the beginning of stage $k$ by $L_k$. There are some trivial bounds on the number of points and lines at stage $k$ that can be obtained with this notation.  Since a point in $P_k$ must lie at the intersection of at least two lines of $L_{k-1}$ we know that at stage $k$, there are at most $\binom{m_{k-1}}{2}$ points.  Similarly, since a line in $L_k$ must contain at least two points from $P_k$ we know that at stage $k$ there are at most $\binom{n_k}{2}$ lines.  In other words, $$n_k \le \binom{m_{k-1}}{2} \hskip24pt \hbox{and} \hskip24pt m_k \le \binom{n_k}{2}.$$  From this it follows that $$n_{k+1} \le \binom{m_k}{2} \le \binom{\binom{n_k}{2}}{2} < \binom{\frac{{n_k}^2}{2}}{2} < \frac{\left(\frac{{n_k}^2}{2}\right)^2}{2} = \frac{{n_k}^4}{8}$$ and $$m_{k+1} \le \binom{n_{k+1}}{2} \le \binom{\binom{m_k}{2}}{2} < \binom{\frac{{m_k}^2}{2}}{2} < \frac{\left(\frac{{m_k}^2}{2}\right)^2}{2} = \frac{{m_k}^4}{8}.$$

Note that a stage in this iterative process can be alternatively defined as follows:
\begin{enumerate}
\item Place a point at any intersection of a pair of lines for which a point does not already exist.
\item Take the dual of the configuration of points and lines (points become lines and lines become points).
\item Return to step 1.
\end{enumerate}
Hence, points and lines play a very similar role in this process and we only need to consider bounds on one of the two quantities.  Henceforth we will only provide arguments concerning the bounds on $n_k$. A trivial lower bound is given in the following:

\begin{prop} For all $k \in \N$, $n_{k+1} \geq n_k + 1$.
\end{prop}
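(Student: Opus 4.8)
The plan is to prove the slightly stronger statement that each stage creates at least one genuinely new point, that is $P_{k+1}\setminus P_k\neq\emptyset$; since $P_k\subseteq P_{k+1}$ always holds, this is equivalent to $n_{k+1}\ge n_k+1$. First I would record the structural fact already implicit in the construction: $L_k$ is exactly the set of lines spanned by pairs of points of $P_k$, and $P_{k+1}$ is exactly the set of all pairwise intersection points of the lines of $L_k$. It therefore suffices to exhibit two lines of $L_k$ whose common point does not lie in $P_k$. The natural tool is the convex hull $H$ of $P_k$: because $P_k\supseteq P_1$ and the four initial points are in general position, $P_k$ is not collinear, so $H$ is a genuine convex polygon and every point of $P_k$ lies in $H$; hence any intersection point we can force to lie strictly outside $H$ is automatically new. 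If $H$ has at least four vertices I would pick two non-adjacent edges, whose spanning lines lie in $L_k$ and are non-parallel by the standing hypothesis, so they meet in a unique point $X$; a short supporting-line argument shows that non-adjacent edges of a convex polygon meet, when extended, strictly outside $H$, so $X\in P_{k+1}\setminus P_k$.

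The delicate case is when $H$ is a triangle $ABC$, since then (as the four-point example already shows) no two spanning lines need meet outside $H$ and the new points may all be interior. Here I would first locate a point $D\in P_k$ lying strictly inside $ABC$, which exists because whenever the hull is a triangle the original configuration already contributes a strictly interior point that remains interior to the larger hull. I then form the two cevian-type intersections $F=BD\cap AC$ and $G=CD\cap AB$, each the meeting of two distinct lines of $L_k$ and each lying strictly inside the corresponding edge. If either $F$ or $G$ fails to lie in $P_k$ it is already the desired new point; otherwise $F,G\in P_k$, so the line $FG$ lies in $L_k$, and since $FG$ meets the boundary of $ABC$ only in the open edges $AC$ and $AB$, its intersection with the line $BC$ (unique, by non-parallelism) lies strictly outside $H$ and is again new.

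I expect the main obstacle to be precisely this triangle case: unlike the polygonal case it cannot be settled by producing an external point from the hull edges alone, and the cevian construction is exactly what circumvents it. The remaining work is routine: verifying that a strictly interior $D$ exists, that $F$ and $G$ are interior to their edges and distinct, and that the relevant pairs of lines are genuinely distinct and, by hypothesis, non-parallel, so that all the asserted intersection points actually exist.
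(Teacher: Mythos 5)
Your proof is correct, and its overall skeleton --- pass to the convex hull, split on whether it has at least four vertices or is a triangle, and in the first case intersect the lines of two non-adjacent edges strictly outside the hull --- coincides with the paper's. Where you genuinely diverge is the triangle case. The paper argues by contradiction (assuming the process stabilizes): if two sides of the triangle carry non-vertex points, the line through them exits through the third side's line outside the hull; hence at most one side carries points; an interior point would, via the same cevian construction you use, place points on two sides, a contradiction; and the only remaining configuration --- all but one point of $P_k$ collinear --- is ruled out by the invariant, inherited from the general position of the four starting points, that every line misses at least two points of the configuration. You instead make the cevian construction the engine of the whole case: you first must prove that an interior point $D$ exists, and then either one of the cevian feet $F$, $G$ is new, or the line $FG$ lies in $L_k$ and meets the line $BC$ strictly outside the hull. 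The trade-off is clear: the paper never needs interior-point existence but must dispose of the degenerate near-collinear configuration, while you never face that degenerate case but owe the interior-point lemma. On that point, your stated justification --- that ``the original configuration already contributes a strictly interior point'' --- is not literally true when the four starting points are in convex position: there the interior point is the intersection of the two diagonals, which enters only at $P_2$ (and for $k=1$ the hull is then a quadrilateral, so the triangle case cannot occur). This is exactly the kind of routine verification you flagged, and it does go through, so there is no gap --- but it is the one load-bearing step your argument requires that the paper's does not.
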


\begin{proof} If this claim is false then we must have a stage at which the process stabilizes \cite{IR03}.  So, suppose that the process stabilizes at the beginning of stage $k$ and let conv($P_k$) denote the convex hull of $P_k$, where $|$conv($P_k)|$ denotes the number of vertices of this convex hull.  Suppose first that $|$conv($P_k)| \ge 4$.  In this case, we can find two nonadjacent, nonparallel sides of the convex hull, which lie on lines that intersect outside of the convex hull.  This contradicts the stability supposition.  So, $|$conv($P_k)| = 3$. Let $\{a,b,c\}$ be the set of vertices of the triangle forming the convex hull. Suppose that there exist points along at least two of the sides of the triangle defined by $\{a,b,c\}$, say $x \in ab$ and $y \in bc$.  In this case, the line formed by $xy$ must intersect $ac$ outside the convex hull, again contradicting stability.  So,
there exist points along at most one of the sides of the triangle defined by $\{a,b,c\}$.  Suppose that there exists some point $x$ in the interior of $\{a,b,c\}$ and define $y = ax \cap bc$, $z = cx \cap ab$.  In this case, we have $y \in bc$ and $z \in ab$, a contradiction to the assumption that at most one side of the triangle contains points. The only remaining possibility is that $P_k$ is comprised of $n_k - 1$ collinear points.  But, the starting configuration of points and lines has the condition that for any line in $L_1$, there are at least two points of $P_1$ not passing through it.  Since we never remove any points during this process, then this must hold true for every stage, in particular stage $k$.  This contradiction completes the proof.
\end{proof}

We define the \textit{degree} of a point $p \in P_k$, denoted $d_k(p)$, to be the number of distinct lines incident upon $p$ at the beginning of stage $k$.  Similarly, the degree of a line $\ell \in L_k$, denoted $d_k(\ell)$, is the number of distinct points through which it passes at the beginning of stage $k$.  Also, let $$\delta_k = \min \{d_k(p) \mid p \in P_k \} \hskip.5in \hbox{and} \hskip.5in \overline{\delta}_k = \min \{d_k(\ell) \mid \ell \in L_k \}$$ and $$\Delta_k = \max \{d_k(p) \mid p \in P_k \} \hskip.5in \hbox{and} \hskip.5in \overline{\Delta}_k = \max \{d_k(\ell) \mid \ell \in L_k \}.$$  Define an \textit{$n \times n$ grid} to be any configuration of two collections of $n$ parallel lines, where the one collection is not parallel to the other.  Using these definitions, we obtain the following observation:

\begin{prop} For all $k \in \N$, $\delta_k \geq 3$.
\end{prop}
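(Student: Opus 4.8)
The plan is to prove the stronger, pointwise statement that \emph{every} point is incident to at least three lines from the moment it appears, and then observe that incidences are never destroyed, so the minimum degree stays at least $3$ at every stage. I would split the points of $P_k$ into the four original points and the points created at some later intersection step, and reduce both cases to a single non-degeneracy fact about the configuration, which I would isolate as the key lemma. Because incidences only accumulate (we have $L_1 \subseteq L_2 \subseteq \cdots$ and points are never removed), it suffices to exhibit three lines through each point at the stage it is born.

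For the four initial points this is immediate: each $p_i \in P_1$ is joined to the other three by three lines of $L_1$, and these are distinct since $P_1$ is in general position (equality of two of them would force three of the $p_i$ collinear); as $L_1 \subseteq L_k$ this gives $d_k(p_i) \ge 3$ for all $k$. Now consider a point $p$ born at stage $j$, i.e.\ $p \in P_{j+1} \setminus P_j$. Such a $p$ arises as the intersection of two lines $\ell_1, \ell_2 \in L_j$, so it already carries two incident lines. To produce a third, I would locate a point $q \in P_{j+1}$ lying off $\ell_1 \cup \ell_2$. Then $pq$ is distinct from both $\ell_1$ and $\ell_2$ (since $q$ lies on neither), and $pq$ is added when $L_{j+1}$ is formed from the pairs of $P_{j+1}$; hence $p$ is incident to the three distinct lines $\ell_1, \ell_2, pq$ at stage $j+1$ and, by monotonicity, at every later stage.

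Everything therefore rests on the structural claim that for $k \ge 2$ the set $P_k$ is \emph{not} contained in the union of any two lines. Since $P_2 \subseteq P_k$, it is enough to prove this for $P_2$, and here the general-position hypothesis on the starting quadrilateral does the work. The set $P_2$ contains the four original points together with the three diagonal points $p_1p_2 \cap p_3p_4$, $p_1p_3 \cap p_2p_4$, and $p_1p_4 \cap p_2p_3$. If some pair of lines $\ell, \ell'$ covered $P_2$, then, because general position forces each line to contain at most two of the $p_i$, the two lines would have to split the four points into two pairs, so that $\{\ell, \ell'\}$ is a pair of opposite sides, say $\ell = p_1p_2$ and $\ell' = p_3p_4$. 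But then the diagonal point $p_1p_3 \cap p_2p_4$ lies on neither: it could lie on $\ell = p_1p_2$ only by coinciding with $p_1$, which would force $p_1, p_2, p_4$ collinear, contradicting general position. This contradiction yields the lemma, and hence the required $q \in P_2 \subseteq P_{j+1}$ off $\ell_1 \cup \ell_2$ always exists.

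I expect the main obstacle to be precisely this lemma --- ruling out the degenerate possibility that the two lines $\ell_1, \ell_2$ meeting at a freshly created point happen to carry every other point of the configuration. Once the non-coverability of $P_2$ by two lines is secured, the remainder is bookkeeping: checking that $pq$ is genuinely a new direction through $p$, and that the containments $L_j \subseteq L_{j+1} \subseteq \cdots$ propagate the bound $d_k(p) \ge 3$ to all subsequent stages, whence $\delta_k \ge 3$ for every $k \in \N$.
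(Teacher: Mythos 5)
Your proof is correct, and at its core it pivots on the same fact as the paper's: that $P_2$ (hence every $P_k$ with $k \ge 2$, by monotonicity of the construction) cannot be covered by the union of two lines. The differences lie in the packaging and in how that fact is established. The paper argues by contradiction: a point of degree $2$ at stage $k$ forces $P_k \subseteq \ell \cup \ell'$ (using that $L_k$ joins every pair of points of $P_k$), and this is killed for $k \ge 2$ by counting --- $n_2 = 7$ while $\overline{\Delta}_2 = 3$, so two lines cover at most $6$ of the $7$ points of $P_2$ --- with the case $k = 1$ checked directly via $\delta_1 = 3$. You instead run the argument forwards, showing every point carries three distinct lines from the stage of its birth: the two defining lines of a new intersection point plus the join to a witness $q \in P_2$ lying off both, and you prove the non-coverage lemma geometrically (each line holds at most two of the four starting points, forcing the covering pair to be opposite sides of the quadrilateral, which then miss a diagonal point) rather than by counting. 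Your route is slightly more self-contained, needing only general position and the existence of one diagonal point instead of the enumeration $n_2 = 7$ and the computation of $\overline{\Delta}_2$; the paper's counting is shorter once those two stage-$2$ facts are granted, and it also silently covers the possibility that the two covering lines are not themselves in $L_2$ (any line meeting $P_2$ in two or more points already lies in $L_2$). Note that both arguments lean on the same convention about the process --- that stage $j$ joins all pairs of the newly enlarged point set $P_{j+1}$ --- which is what places your third line $pq$ in $L_{j+1}$ and what justifies the paper's step from a degree-$2$ point to a two-line cover of $P_k$.
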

\begin{proof}
Suppose to the contrary that there exists some $k \in \N$ with $\delta_k < 3$. Since there are no points of degree 1, we must have $\delta_k = 2$.  So there exists $p \in P_k$ with $d_k(p) = 2$, i.e., there exist two lines $\ell,\ell^{\prime} \in L_k$ with $P_k \subseteq \ell \cup \ell^{\prime}$.  Note that $n_2 = 7$ and $\overline{\Delta}_2 = 3$ and so for all $\ell,\ell^{\prime} \in L_2$, $P_2 \nsubseteq \ell \cup \ell^{\prime}.$  Since we never remove points in this iterative process, we know that if there exists $\ell,\ell^{\prime} \in L_k$ with $P_k \subseteq \ell \cup \ell^{\prime}$, then $k<2$, i.e., $k=1$.  But we know that $\delta_1 = 3,$ a contradiction.
\end{proof}

We can obtain major improvements to the trivial lower bound using the following:

\begin{lemma} The minimum number of parallel lines required to pass through all of the intersections of an $n \times n$ grid is $2n-1$.
\end{lemma}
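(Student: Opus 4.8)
The plan is to reduce the geometry to a counting problem about a linear form on a lattice. After an affine change of coordinates — which preserves parallelism, collinearity, and incidences — I may assume the grid is the standard integer grid, with its two families of lines horizontal and vertical, so that the set of intersection points is exactly $\{(i,j) : 0 \le i,j \le n-1\}$. A covering family of mutually parallel lines has a common direction, and since the covering lines must be transverse to both grid families (otherwise the $n$ lines of one family trivially cover all intersections, giving $n$ rather than $2n-1$), this direction is neither horizontal nor vertical. I would first note that the direction must be rational: two lattice points determine a rational slope, so a line of irrational slope meets the grid in at most one point and such a direction is never optimal. Hence it suffices to consider a primitive integer direction, whose lines are the level sets of a linear form $\phi(i,j) = \gamma i + \delta j$ with $\gamma,\delta$ coprime and both nonzero. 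The number of lines of this family needed to cover the grid equals the number of distinct values $\phi$ assumes on $\{0,\dots,n-1\}^2$, so the lemma reduces to showing this count is minimized at $2n-1$.

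For the upper bound I would exhibit the direction of slope $-1$, i.e.\ $\phi(i,j) = i+j$: as $(i,j)$ ranges over the grid, $i+j$ takes each integer value in $\{0,1,\dots,2n-2\}$ and no others, so the $2n-1$ lines $i+j=c$ cover every intersection, giving at most $2n-1$.

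For the lower bound I would show that every admissible $\phi$ takes at least $2n-1$ distinct values, using only the boundary of the grid. Since $\phi$ and $-\phi$ give the same count, and since the reflection $j \mapsto n-1-j$ fixes the grid while replacing $\delta$ by $-\delta$ up to an additive constant (hence without changing the number of distinct values), I may assume $\gamma,\delta > 0$. Then consider the $2n-1$ boundary points forming the bottom edge $(i,0)$ and the right edge $(n-1,j)$ for $0 \le i,j \le n-1$. On the bottom edge $\phi = \gamma i$ takes $n$ distinct values, all at most $\gamma(n-1)$; on the right edge $\phi = \gamma(n-1)+\delta j$ takes $n$ distinct values, all at least $\gamma(n-1)$; and the two ranges meet only in the shared corner value $\gamma(n-1)$. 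Thus $\phi$ already takes at least $n+n-1 = 2n-1$ distinct values on the boundary alone, so at least $2n-1$ lines are required. Combining the two bounds pins the minimum at exactly $2n-1$.

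The hard part is making the lower bound uniform over \emph{all} admissible directions at once, rather than analyzing each slope separately, and the boundary-edge argument is precisely what resolves this: a single pair of edges witnesses $2n-1$ distinct values for every transverse primitive direction. I would also remark that this lower bound uses only the monotonicity of $\phi$ along each edge and therefore requires no regularity of the grid, so $2n-1$ is in fact a lower bound for an arbitrary grid; the upper bound above is what identifies the optimum as $2n-1$ for the regular grid.
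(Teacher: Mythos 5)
Your opening normalization is the gap. The paper's $n \times n$ grid is \emph{any} two non-parallel families of $n$ parallel lines each, so in coordinates where the families are horizontal and vertical its intersection set is a product $A \times B$ with $A = \{a_1 < \cdots < a_n\}$ and $B = \{b_1 < \cdots < b_n\}$ arbitrary sets of reals. An affine map can make the two directions horizontal and vertical, but affine maps preserve ratios of lengths along parallel lines, so no affine change of coordinates can uniformize the spacings: you may not assume the grid is $\{0,\dots,n-1\}^2$, and the grids this lemma is applied to (which arise from an evolving point-line configuration) have no such regularity. Two parts of your argument genuinely use the integer structure: the rational/irrational dichotomy for the covering slope, which is meaningless for general $A,B$, and the slope $-1$ upper bound. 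The upper bound in fact fails for general grids: for generic $A,B$ and $n \ge 3$, any transverse direction identifies at most one pair among the $n^2$ values $\gamma a_i + \delta b_j$, so at least $n^2-1$ transverse parallel lines are needed, and attainment of $2n-1$ is special to evenly spaced grids. (The paper is also loose on this point---it only remarks that equality holds when the relevant sets are arithmetic progressions---and attainment is never used; the application, the bound $\delta_{k+1} \ge \min\{n_k-1, 2\delta_k-3\}$, needs only the lower bound. Likewise, both you and the paper tacitly restrict to covering families transverse to the grid, which is legitimate because in the application the covering lines all pass through a third point at infinity.)

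Fortunately, the half of your argument the paper actually needs survives, exactly as your closing remark says: the boundary-monotonicity lower bound never uses regularity. On $A \times B$, with $\gamma, \delta > 0$ (after negating $\phi$ and/or reindexing one family in reverse order), $\phi$ increases strictly along the bottom edge $(a_i,b_1)$ and along the right edge $(a_n,b_j)$, and the two ranges of values meet only at the shared corner, giving at least $2n-1$ distinct values and hence at least $2n-1$ covering lines. It is worth recognizing what this argument is: the chain $\gamma a_1 + \delta b_1 < \cdots < \gamma a_n + \delta b_1 < \gamma a_n + \delta b_2 < \cdots < \gamma a_n + \delta b_n$ is precisely the standard elementary proof of the sumset inequality $|A'+B'| \ge |A'|+|B'|-1$. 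The paper's proof is the same mathematics with that inequality cited rather than reproved: it projects the grid along the covering direction, identifies the $x$-intercepts of the covering lines with a sumset $A+B$, and quotes the bound from \cite{N96}. So your route, once repaired---delete the normalization and the rationality discussion, run the boundary chain directly on $A \times B$, and either drop exact attainment or assert it only for grids whose families are evenly spaced---is a self-contained rendering of the paper's argument rather than a genuinely different one.
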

\begin{proof}
Suppose that $Q$ and $R$ are sets of parallel lines that comprise an $n \times n$ grid.  Let $S$ be a minimal witness set of $s$ parallel lines passing through all intersections of the grid. We aim to show that $s \ge 2n-1$. Without loss of generality, orient the grid so that the lines of $S$ are vertical in the $xy$-plane and let $X = \{x_1,x_2,\dots,x_s\}$ be the $x$-intercepts of the lines of $S$. So $X$ is the collection of projected points, when we project the grid intersections onto the $x$-axis with this orientation. Let $\pi(p)$ denote the projection of a point $p$ in the grid onto the $x$-axis. Arbitrarily choose lines $\ell_q,\ell_r$ in the grid with $\ell_q \in Q$ and $\ell_r \in R$. Let $q_1,q_2,\dots,q_n$ and $r_1,r_2,\dots,r_n$ be the points of intersection of $\ell_q$ with $R$ and $\ell_r$ with $Q$, respectively, where $$\pi(q_1) \le \pi(q_2) \le \cdots \le \pi(q_n)$$ and 
$$
\pi(r_1) \le \pi(r_2) \le \cdots \le \pi(r_n).
$$ 
Suppose also that $q_i = r_j$. Define $A$ and $B$ to be the sets of real numbers given by
$$
A=\{\pi(q_1),\pi(q_2),\dots,\pi(q_n)\}$$ and $$B=\{\pi(r_1)-\pi(r_j),\pi(r_2)-\pi(r_j),\dots,\pi(r_n)-\pi(r_j)\}.
$$ 
Under this setting we have that $S = A+B$ and thus $$s = |A+B|.$$  It is well known that $$|A+B| \ge 2n-1$$ for any pair $A,B$ of sets of cardinality $n$ and that equality is achieved when $A$ and $B$ are arithmetic progressions \cite{N96}.  It follows that $s \ge 2n-1$, completing the proof.
\end{proof}

Using this lemma we can prove the following:

\begin{theorem} $\delta_{k+1} \geq \min \{n_k-1, 2 \delta_k -3\}.$
\end{theorem}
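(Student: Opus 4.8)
The plan is to reduce the statement to a count of directions. Since $L_{k+1}$ contains the line $\overline{pq}$ for every pair of points $p,q \in P_{k+1}$, the degree $d_{k+1}(p)$ is exactly the number of distinct lines through $p$ that meet $P_{k+1}\setminus\{p\}$, i.e.\ the number of distinct directions from $p$ to the remaining points of $P_{k+1}$. So I would fix an arbitrary $p \in P_{k+1}$ and aim to exhibit at least $\min\{n_k-1,\,2\delta_k-3\}$ distinct such directions, which bounds $\delta_{k+1}$ from below.

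Next I would build the relevant points. Choose two lines $\ell,\ell' \in L_k$ incident to $p$: if $p$ was just created, take the two lines whose intersection produced it; if $p \in P_k$, take any two of its $\geq \delta_k \geq 3$ incident lines. Each of $\ell,\ell'$ carries a point of $P_k$ distinct from $p$; call them $u \in \ell$ and $v \in \ell'$. Because $d_k(u),d_k(v) \geq \delta_k$, there are at least $\delta_k-1$ lines of $L_k$ through $u$ other than $\ell$ and at least $\delta_k-1$ through $v$ other than $\ell'$. Invoking the standing hypothesis that no two lines of $L_k$ are parallel, every line of the first family meets every line of the second in a point which, being the intersection of two members of $L_k$, already belongs to $P_{k+1}$. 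This yields a grid-like array of about $(\delta_k-1)^2$ points of $P_{k+1}$, each determining a direction from $p$.

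The heart of the argument is then the sumset bound supplied by the Lemma. Placing $p$ at the origin with $\ell,\ell'$ as the axes and writing $u=(c,0)$, $v=(0,d)$, a direct computation shows that the slope from $p$ to the intersection of the line through $u$ hitting $\ell'$ at height $\alpha$ with the line through $v$ hitting $\ell$ at abscissa $\beta$ equals $\frac{d}{c}\cdot\frac{\alpha}{\alpha-d}\cdot\frac{\beta-c}{\beta}$. Hence, after passing to logarithmic coordinates, the set of slopes is a translate of a sumset $A+B$ with $|A|,|B| \geq \delta_k-2$, the $-2$ coming from discarding the single line $\overline{uv}$ common to both families (which is exactly what produces the two axis directions). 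The inequality $|A+B| \geq |A|+|B|-1$ used in the Lemma then gives at least $2\delta_k-5$ distinct off-axis directions, and restoring the two directions along $\ell$ and $\ell'$ brings the total to $2\delta_k-3$.

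I expect the main obstacle to be twofold. First, converting the multiplicative slope expression into an additive sumset needs the factors to have a consistent sign; cancellation in $\mathbb{R}^{*}$ can shrink a product set below $|A|+|B|-1$, so one must either choose $u,v$ and an orientation forcing the relevant quantities onto one side, or otherwise absorb the loss. Second, the alternative bound $n_k-1$ is what governs the dense regime: when $\delta_k$ is large relative to $n_k$ the two families cannot be kept in sufficiently general position and the grid degenerates, and there one instead argues directly that $p$ already sees $n_k-1$ distinct directions. Reconciling these two regimes into the single clean bound $\min\{n_k-1,\,2\delta_k-3\}$, and pinning down the exact constant, is the crux of the proof.
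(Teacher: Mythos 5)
Your reduction to counting directions from $p$ is sound, and your grid of intersections does lie in $P_{k+1}$; but the argument breaks at exactly the step you flag as an ``obstacle,'' and that obstacle is fatal to the count, not a technicality. Because you take $u,v$ on two \emph{different} lines through $p$, the slope to a grid point is the \emph{product} $\frac{d}{c}f(\alpha)g(\beta)$ with $f(\alpha)=\frac{\alpha}{\alpha-d}$, $g(\beta)=\frac{\beta-c}{\beta}$, so what you need is $|A\cdot B|\ge |A|+|B|-1$ for finite $A,B\subset\mathbb{R}^{*}$. That inequality is false: $\mathbb{R}^{*}$ has the torsion element $-1$, and signs cancel. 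Concretely, take $p=(0,0)$, $u=(c,0)$, $v=(0,d)$, let the two lines through $u$ other than $\ell,\overline{uv}$ meet $\ell'$ at heights $2d$ and $2d/3$, and let the two lines through $v$ other than $\ell',\overline{uv}$ meet $\ell$ at abscissas $-c$ and $c/3$. Then $f$ and $g$ each take exactly the values $\{2,-2\}$, the four grid points determine only the two slopes $\pm 4d/c$, and your construction certifies only $2+2=4$ lines through $p$, while the bound to be proved (here $\delta_k=4$) is $2\delta_k-3=5$. In general the best sign-aware bound is $|A\cdot B|\ge |A|+|B|-2$, which leaves you permanently one short, at $2\delta_k-4$; nothing in your write-up chooses $u$, $v$, or the subfamilies so as to force all factors to one side, so as written the proof does not close.

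The paper's arrangement is designed precisely to avoid this. Instead of two transversal lines through $p$, it uses a single line $\ell\in L_k$ through $p$ with $d_k(\ell)\ge 3$, takes two further points $q,r\in \ell\cap P_k$, and sends $\ell$ to infinity. Then the lines through $q$, the lines through $r$, and the lines through $p$ become three families of \emph{parallel} lines; a direction from $p$ becomes an $x$-intercept, the relevant set is the \emph{additive} sumset $A+B$ of intercepts, and $|A+B|\ge|A|+|B|-1$ is legitimate over $(\mathbb{R},+)$ --- this is the paper's grid-covering lemma, giving $2(\delta_k-1)-1$ directly. Collinearity of $p,q,r$ also saves one line per family: the line joining the two grid-generating points is $\ell$ itself, already discarded, so the families have size $\delta_k-1$ rather than your $\delta_k-2$. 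The cost is needing a line through $p$ with three points of $P_k$ on it, and that is exactly where the other branch of the minimum comes from: if every line of $L_k$ through $p$ has degree $2$, then $d_k(p)=n_k-1$ and the bound is immediate. Note that your own account of the $\min$ (``the grid degenerates when $\delta_k$ is large'') does not match your construction, which never degenerates; had your sumset step been valid you would have proved $\delta_{k+1}\ge 2\delta_k-3$ outright, with no case distinction. One genuine merit of your reduction worth keeping: you bound the degree of every $p\in P_{k+1}$, including the newly created points, whereas the paper's proof only treats $p\in P_k$.
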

\begin{proof} Let $p \in P_k$. It suffices to show that $$d_{k+1}(p) \ge \min \{n_k-1, 2 \delta_k -3\}.$$ First suppose each line in $L_k$ that passes through $p$ has degree 2. In this case, it's easy to see that there are $d_k(p)+1$ points at the beginning of stage $k$ and so $d_k(p) = n_k-1$.  Since we never remove lines, we know that
\begin{align*}
d_{k+1}(p) &\ge d_k(p) \\
&= n_k-1 \\
&\ge \min \{n_k - 1, 2 \delta_k - 3 \}.
\end{align*}
 Now suppose there exists a line $\ell \in L_k$ that passes through $p$ with $d_k(\ell) \ge 3$. Let $q,r \in P_k$ be the other two points on $\ell$. Note that $d_k(q) \ge \delta_k$ and $d_k(r) \ge \delta_k$ and so there exist two sets of lines $$L_q = \{\ell_{q_1},\ell_{q_2},\dots,\ell_{q_n}\} \subseteq L_k \backslash \ell \hskip.25in \hbox{and} \hskip.25in L_r = \{\ell_{r_1},\ell_{r_2},\dots,\ell_{r_m}\} \subseteq L_k \backslash \ell,$$ where $n,m \ge \delta_k-1$ and the sets $L_q \cup \ell$ and $L_r \cup \ell$ consist of the lines incident upon $q$ and $r$, respectively. Now, consider the real plane as a subset of the real projective plane in the standard way and let $\ell$ be the line at infinity.  We restrict our attention to arbitrarily chosen subsets ${L_q}^{\prime} \subseteq L_q$ and ${L_r}^{\prime} \subseteq L_r$, where $|{L_q}^{\prime}| = |{L_r}^{\prime}| = \delta_k - 1.$  These lines form a $(\delta_k - 1) \times (\delta_k - 1)$ grid. Now in this grid we will place a point at each intersection for which one does not already exist during stage $k$. After doing so, we will construct a line through each pair of points for which one does not already exist. In particular, we will do so for pairs of points of the form ($p,x$), where $x$ lies at the intersection of lines from ${L_q}^{\prime}$ and ${L_r}^{\prime}$. So, at the beginning of stage $k+1$, there will be at least $s$ lines incident upon $p$, where $s$ denotes the number of lines necessary to adjoin $p$ with all of the intersections of the grid. In other words, $d_{k+1}(p) \ge s$. Note that any lines passing through $p$ would form a third collection of parallel lines to add to the grid. Therefore, $s$ is at least the minimum number of parallel lines required to pass through all of the intersections of a $(\delta_k - 1) \times (\delta_k - 1)$ grid. Applying Lemma 1 yields
\begin{align*}
d_{k+1}(p) &\ge s \\
&\ge 2(\delta_k - 1) - 1 \\
&= 2 \delta_k - 3 \\
&\ge \min \{n_k-1, 2 \delta_k -3\}.
\end{align*}
\end{proof}

Now by using techniques similar to the preceding proofs, we can obtain even faster growth of the minimum degree.  We will then use the growth rate of $\delta_k$ to provide arguments for a better lower bound on $n_k$.  First, let $cr(G)$ denote the crossing number of a graph, which is the minimum number of crossings in a planar drawing of the graph $G$.  We will use the following lemma regarding crossing numbers (the proof can be found in \cite{PaTo97}):

\begin{lemma} If a graph $G$ with $n$ vertices and $e$ edges has $e > 7.5n$, then we have $$cr(G) \ge \frac{e^3}{33.75n^2}.$$
\end{lemma}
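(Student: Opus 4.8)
The plan is to prove this crossing-number inequality in two moves: first a crude linear lower bound valid for every graph, then a probabilistic amplification that boosts it to the cubic bound. I would begin with the base inequality $cr(G) \geq e - (3n-6)$, which follows from Euler's formula: in any drawing realizing $cr(G)$ crossings, deleting one edge from each crossing produces a plane graph, and a simple plane graph on $n \geq 3$ vertices has at most $3n-6$ edges, so $e - cr(G) \leq 3n-6$. In particular $cr(G) \geq e - 3n$.

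The amplification step is the heart of the argument. Fix an optimal drawing of $G$ and form a random induced subgraph $G_p$ by retaining each vertex independently with probability $p \in (0,1]$. Writing $n_p, e_p$ for its numbers of vertices and edges and $X_p$ for the number of crossings of the inherited drawing, linearity of expectation gives $\mathbb{E}[n_p] = pn$, $\mathbb{E}[e_p] = p^2 e$ (an edge survives iff both of its endpoints do), and $\mathbb{E}[X_p] = p^4\, cr(G)$ (in an optimal drawing each crossing involves four distinct vertices and survives iff all four are retained). Since the inherited drawing is one particular drawing of $G_p$ we have $cr(G_p) \leq X_p$; combining this with the base inequality applied to $G_p$ and taking expectations yields $p^4\, cr(G) \geq \mathbb{E}[cr(G_p)] \geq p^2 e - 3pn$, that is, $cr(G) \geq e/p^2 - 3n/p^3$. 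Choosing $p$ to maximize the right-hand side (the optimal $p$ is a constant multiple of $n/e$, which lies in $(0,1]$ precisely when $e$ exceeds a constant multiple of $n$) produces a bound of the form $cr(G) \geq c\, e^3/n^2$.

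The main obstacle is the constant. The routine version above yields a cubic bound with the classical constant (of order $1/64$) under a threshold such as $e \geq 4n$; reaching the sharper $1/33.75$ at threshold $e > 7.5n$ requires replacing Euler's inequality by a stronger base estimate. This is exactly the Pach--T\'oth refinement: one first bounds the number of edges of a graph that admits a drawing with at most $k$ crossings per edge, namely $e \leq (k+3)(n-2)$ for small $k$, and uses these to derive base inequalities stronger than $cr(G) \geq e - 3n$, which are then fed into the identical sampling calculation. I would therefore concentrate the technical effort on these edge bounds for near-planar graphs, since the amplification step is otherwise unchanged; the complete argument appears in \cite{PaTo97}.
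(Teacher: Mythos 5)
Your outline is correct and is essentially the paper's own treatment: the paper gives no proof of this lemma, deferring entirely to \cite{PaTo97}, and your sketch (Euler-based linear bound, random vertex sampling with $\mathbb{E}[X_p]=p^4\,cr(G)$, optimization of $p$, then the Pach--T\'oth edge bounds $e \le (k+3)(n-2)$ to sharpen the base inequality) is precisely the argument of that cited paper. Your accounting of the constants is also accurate --- the improved base bound $cr(G) \ge 5e - 25n$ fed into the sampling step with $p = 7.5n/e$ gives exactly $e^3/(33.75\,n^2)$ under the threshold $e > 7.5n$ --- so deferring the near-planar edge bounds to the citation leaves your write-up no less complete than the paper's.
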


\noindent We now use this crossing number inequality in the following theorem.  The argument closely resembles Sz\'ekely's proof (\cite{S97}) of the Szemer\'edi-Trotter Theorem (first appearing in \cite{SzTr83}).

\begin{theorem} Let $\mathcal{F} = \{F_1,F_2,\dots,F_N\}$ be a collection of $N \ge 4$ families, each of exactly $k \ge 2$ parallel lines, no two collections parallel to each other. Let $P$ denote the collection of points that lie at the intersections of lines $\ell_i$ and $\ell_j$, where $\ell_i \in F_1$ and $\ell_j \in F_j$ for some $2 \le j \le k$. Then $$|P| \ge ck^2N^{1/2},$$ where $c$ is a positive real constant.
\end{theorem}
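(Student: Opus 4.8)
The plan is to mimic Sz\'ekely's crossing-number proof of Szemer\'edi--Trotter, using Lemma 2 in place of the standard incidence bound. Throughout I read the intersection condition as ranging over all families other than $F_1$ (i.e.\ $2 \le j \le N$), so that $P$ is exactly the set of points at which some line of $F_1$ meets a line of one of the remaining $N-1$ families. Write $m = |P|$; the goal is to show $m \ge c k^2 N^{1/2}$.

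The first step is a structural observation that makes the edge count clean, followed by the construction of the Sz\'ekely graph. Fix any line $\ell$ belonging to a family $F_j$ with $j \ge 2$. Since the $k$ lines of $F_1$ are parallel and $\ell$ is not parallel to them, $\ell$ meets $F_1$ in exactly $k$ distinct points, all lying in $P$; conversely, any point of $P$ on $\ell$ is by definition the intersection of $\ell$ with some line of $F_1$, hence one of these $k$ points. So each of the $(N-1)k$ lines outside $F_1$ carries exactly $k$ points of $P$. I now build a graph $G$ on vertex set $P$ by ordering, along each such line, its $k$ points and joining consecutive ones by a straight edge. Each line contributes $k-1$ edges, and since two distinct points determine a unique line the edges are distinct, so $G$ is simple with
$$e = (N-1)k(k-1).$$
Two edges cross only at a common point of their supporting lines; lines within a family are parallel and never cross, while two lines from different families cross at most once, so
$$cr(G) \le \binom{(N-1)k}{2} < \frac{((N-1)k)^2}{2}.$$

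Finally I split into two cases according to the hypothesis of Lemma 2. If $e \le 7.5\,m$, then $m \ge e/7.5 = (N-1)k(k-1)/7.5$, which already exceeds $c k^2 N^{1/2}$ for a suitable $c$, using $N-1 \ge N/2 \ge N^{1/2}$ and $k-1 \ge k/2$. Otherwise $e > 7.5\,m$, so Lemma 2 gives
$$\frac{((N-1)k)^2}{2} > cr(G) \ge \frac{e^3}{33.75\,m^2} = \frac{((N-1)k(k-1))^3}{33.75\,m^2}.$$
Solving for $m$ yields $m^2 > \frac{2(N-1)k(k-1)^3}{33.75}$, and applying $k-1 \ge k/2$ (valid since $k \ge 2$) and $N-1 \ge N/2$ (valid since $N \ge 4$) reduces this to $m \ge c k^2 N^{1/2}$ for an absolute constant $c > 0$.

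The routine parts are the final arithmetic and the selection of $c$ as the minimum of the two constants produced above. The step requiring the most care is the crossing bound: I must argue that within a parallel family no crossings occur and that each pair of supporting lines from different families contributes at most one crossing of the drawing (the crossing point either is a vertex of $G$, in which case the incident edges merely share an endpoint, or is interior to one edge of each line). This, together with the exactly-$k$-points observation, is precisely what lets the edge and crossing counts be computed exactly rather than merely estimated, and it is the only place where the special role of $F_1$ and the parallelism within families are essential.
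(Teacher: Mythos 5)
Your proof is correct, and it runs on the same engine as the paper's: Sz\'ekely's crossing-number argument, with the identical crossing lemma and the identical dichotomy on whether the edge count exceeds $7.5\,|P|$. (Your reading of the typo in the statement --- the index should range over $2 \le j \le N$, not $2 \le j \le k$ --- is also what the paper's own proof assumes.) Where you genuinely diverge is in the choice of the Sz\'ekely graph, and the divergence buys you a simpler finish. The paper draws edges along all $Nk$ lines, including the $k$ lines of $F_1$; since every point of $P$ lies on a line of $F_1$, those lines contribute $|P|-k$ further edges, so the paper's edge count is $|P| + Nk(k-1) - k^2$, a quantity involving the unknown $|P|$ itself. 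Because of this, in the crossing-lemma case the paper cannot simply solve its inequality for $|P|$: it must reinterpret the edge count as at least $i/2$ where $i$ is the number of point-line incidences, separately establish $i \ge Nk^2$, and then combine the three inequalities. Your graph uses only the $(N-1)k$ lines outside $F_1$, each of which (as you correctly argue) carries exactly $k$ points of $P$, so your edge count $(N-1)k(k-1)$ is explicit and independent of $|P|$, and the crossing-lemma case closes in one algebraic step,
$$|P|^2 > \frac{2(N-1)k(k-1)^3}{33.75},$$
followed by $k-1 \ge k/2$ and $N-1 \ge N/2$. In short: your route eliminates the entire incidence-counting layer of the paper's proof (the quantity $i$, the observation that the edges number at least $i/2$, and the bound $i \ge Nk^2$ all become unnecessary), while the paper's route stays closer to the standard Szemer\'edi--Trotter template in which incidences are the protagonist; both yield the same order $k^2 N^{1/2}$ with comparable constants. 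One detail you make explicit that the paper leaves implicit, and which is needed in either version, is that any point of $P$ lying on a line $\ell \notin F_1$ must be one of the $k$ intersections of $\ell$ with $F_1$, so the per-line point counts are exact and edges never pass through vertices of the drawing.
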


\begin{proof} Let $A$ denote this configuration of $|P|$ points and $Nk$ lines. Let $i$ be the number of point-line incidences in $A$. Note that there are $N$ different families of parallel lines in $A$, each containing exactly $k$ lines. For all families except $F_1$, each line contains exactly $k$ points from $P$ and thus contains exactly $k-1$ line segments which connect two points from $A$, call them edges.  We know that $k \ge 2$ and so $k-1 \ge k/2$.  Hence, each line contains at least $k/2$ edges and if we add this up over all of the $Nk$ lines, we see that the number of edges obtained in this manner is at least half of the total number of incidences.  In other words, $$\hbox{(total number of edges)} \ge \frac{i}{2}.$$  Now, we can count the exact number of edges in $A$.  For the $k$ lines of $F_1$, there are $|P| - k$ edges because all $|P|$ points lie on the lines of $F_1$ and for each line we must subtract one to count the number of edges.  For each of the remaining $N-1$ families there are exactly $k$ lines, each containing exactly $k-1$ edges, yielding a total of $$(N-1)k(k-1)$$ edges. Adding these quantities together, we obtain a grand total of $$|P| - k + (N-1)k(k-1)$$ edges, which simplifies to $$|P| + Nk(k-1) - k^2.$$

Now consider the graph $G$ with $V(G) = P$ and $E(G)$ consisting of the aforementioned edges.  Since all of the edges lie on one of $Nk$ lines, and any two lines intersect in at most one point, we have $$\hbox{cr}(G) \le (Nk)^2.$$  Applying the crossing number inequality, we obtain that either
\begin{equation}
|P| + Nk(k-1) - k^2 \le 7.5|P| \label{1}
\end{equation}
 or that
\begin{equation}
(Nk)^2 \ge \frac{(|P| + Nk(k-1) - k^2)^3}{33.75|P|^2} \label{2}
\end{equation}

In the case of (1) we get $$Nk(k-1) - k^2 \le 6.5|P|$$ which implies that $$\frac{Nk(k-1) - k^2}{6.5} \le |P|.$$ Now, since we know that $k \ge 2$ and $N \ge 4$, we have $k-1 \ge k/2$ and $N-2 \ge N^{1/2}$.  Combining this with the previous equation yields $$|P| \ge \frac{Nk(k-1) - k^2}{6.5} \ge \frac{(N-2)k^2}{13} \ge c_1k^2N^{1/2}$$ for some positive constant $c_1$.

In the case of (2) we have $$33.75 |P|^2 (Nk)^2 \ge (|P| + Nk(k-1) - k^2)^3$$ and so $$c_2|P|^{2/3} (Nk)^{2/3} \ge |P| + Nk(k-1) - k^2$$ for some positive constant $c_2$. Recall that the RHS of this inequality is $|E(G)|$, which is at least $i/2$. So we have $$\frac{i}{2} \le c_2|P|^{2/3}(Nk)^{2/3}.$$ Now, since each of the $Nk$ lines in $A$ must pass through at least $k$ points, then there are at least $Nk^2$ incidences. From this it follows that $$Nk^2 \le i \le c_3|P|^{2/3}(Nk)^{2/3}$$ for some positive constant $c_3$. Hence, $$N^3k^6 \le c_4|P|^2(Nk)^2$$ and so $$|P| \ge c_5k^2N^{1/2}$$ for some positive constants $c_4$ and $c_5$. So in both cases, we end up with our desired result.

\end{proof}

Now, we can use the previous result to prove the following lemma regarding degree growth:

\begin{lemma} Given any point $p \in P_k$ with $d_k(p)=d$, there exists a positive real constant $c$ such that
$$
d_{k+1}(p) \geq c \delta_k \left( \frac{n_k}{d} \right)^{1/2}
$$
\end{lemma}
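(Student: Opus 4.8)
The plan is to realize the lines through $p$ at the beginning of stage $k+1$ as \emph{directions} determined by a large grid-like configuration, and then to apply the Szemer\'edi--Trotter-type bound proved in the preceding theorem, together with a projection argument, to count these directions.

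First I would exploit the fact that, by construction, $L_k$ contains the line through every pair of points of $P_k$; hence the $d = d_k(p)$ lines through $p$ partition $P_k \setminus \{p\}$ into $d$ classes, and by the pigeonhole principle one of these lines, call it $\ell$, carries at least $(n_k-1)/d$ points besides $p$. Write $q_0 = p, q_1, \dots, q_{t-1}$ for the points of $P_k$ on $\ell$, so that $t - 1 \ge (n_k-1)/d$. Now pass to the real projective plane and apply a projective transformation sending $\ell$ to the line at infinity. Each $q_i$ becomes a distinct point at infinity, so the lines through $q_i$ other than $\ell$ become a family $F_i$ of parallel lines, with families for different indices pairwise nonparallel; since $d_k(q_i) \ge \delta_k$, each $F_i$ has at least $\delta_k - 1 \ge 2$ lines (using $\delta_k \ge 3$), and I restrict each to exactly $k := \delta_k - 1$ of them. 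Orienting coordinates so that $F_0$, the image of the lines through $p$, is vertical, the lines through $p$ are precisely the vertical lines, and hence $d_{k+1}(p)$ equals the number of distinct vertical lines meeting $P_{k+1}$.

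Next I would apply the preceding theorem to the $N := t-1$ families $F_1, \dots, F_{t-1}$, with $F_1$ as the distinguished family, obtaining a set $P^\ast$ of at least $c(\delta_k-1)^2(t-1)^{1/2}$ intersection points; all of these lie on lines of $L_k$ and so are adjoined to $P_{k+1}$. The crucial step is then to convert this count of \emph{points} into a count of distinct \emph{directions} from $p$, i.e.\ of distinct vertical lines. Here I would use that every point of $P^\ast$ lies on one of the $k$ parallel, non-vertical lines of $F_1$, and that a vertical line meets these $k$ lines in exactly $k$ distinct points; hence no vertical line contains more than $k = \delta_k - 1$ points of $P^\ast$. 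Therefore the number of distinct vertical lines meeting $P^\ast$, each of which is a line through $p$ present in $L_{k+1}$, is at least $|P^\ast|/(\delta_k - 1) \ge c(\delta_k - 1)(t-1)^{1/2}$, and combining with $t - 1 \ge (n_k-1)/d$ and $\delta_k - 1 \ge \tfrac{2}{3}\delta_k$ yields $d_{k+1}(p) \ge c' \delta_k (n_k/d)^{1/2}$ for a suitable positive constant.

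The main obstacle is precisely this passage from points to directions: the Szemer\'edi--Trotter-type estimate controls the number of intersection points but says nothing a priori about how they distribute among the lines through $p$, and the argument hinges entirely on the observation that each such line is transverse to $F_1$ and so absorbs at most $k$ of the counted points. I would also verify the hypotheses of the invoked theorem, namely $N = t-1 \ge 4$ and equal family sizes $k \ge 2$; the size condition follows from $\delta_k \ge 3$, while the small regime $t - 1 < 4$ (equivalently $n_k/d$ bounded) can be absorbed into the constant using the trivial monotonicity $d_{k+1}(p) \ge d_k(p) \ge \delta_k$.
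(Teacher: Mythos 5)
Your proposal is correct and takes essentially the same route as the paper's own proof: pigeonhole to find a rich line $\ell$ through $p$, a projective map sending $\ell$ to infinity to produce pairwise nonparallel families of parallels, truncation of each family to exactly $\delta_k-1$ lines, an application of the preceding Szemer\'edi--Trotter-type theorem with one distinguished (reference) family, and then division by $\delta_k-1$ because every line through $p$ meets at most $\delta_k-1$ of the counted points (they all lie on the reference family), with the degenerate regime $N<4$ absorbed into the constant via $d_{k+1}(p)\ge\delta_k$. The differences are purely cosmetic (the explicit ``vertical direction'' framing and slightly different constants such as $\delta_k-1\ge\tfrac{2}{3}\delta_k$ versus the paper's $\delta_k-1\ge\tfrac{1}{2}\delta_k$).
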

\begin{proof}
Let $p \in P_k$ with $d_k(p) = d$. By the pigeonhole principle, there exists some line $\ell$ through $p$ with at least $s = \frac{n_k-1}{d}$ points on it (excluding $p$). Since each of these $s$ points has at least the minimum degree, we know that there are at least $\delta_k - 1$ lines through each point (excluding $\ell$).  Consider the real plane as a subset of the real projective plane in the standard way and let $\ell$ be the line at infinity. If we restrict our attention to only the points on $\ell$ and the lines through them, then we obtain a grid of $s+1$ families of parallel lines, one family for each of the points on $\ell$.  Each family of parallel lines contains at least $\delta_k - 1$ lines and no two families can be parallel (since they come from distinct points). We would like to restrict our attention to families of exactly $\delta_k -1$ parallel lines.  So for each family, except for the one generated by $p$, arbitrarily choose a subset of $\delta_k-1$ lines and disregard all other lines in that family. Let $F$ be the family of lines through $p$ and choose one family $R$ to be a set of ``reference'' lines. Let $P_0$ denote the set of points that lie at the intersection of a reference line and one of the other $s-1$ families (excluding $F$).

Now, during stage $k$, a point must be added to any intersection for which one does not already exist, in particular all points of $P_0$. Also, a line must be added to connect any pair of points for which one does not already exist, in particular for the pairs in the set $T = \{(p,q) \mid q \in P_0\}$. Let $t$ denote the number of distinct lines generated by pairs in the set $T$.  Note that any such line can pass through at most $\delta_k -1$ points of $P_0$ because all the points of $P_0$ lie in the family $R$, which contains exactly $\delta_k -1$ lines.  It follows that
\begin{equation}
d_{k+1}(p) \ge t \ge \frac{|P_0|}{\delta_k-1} \ge \frac{|P_0|}{\delta_k}, \label{3}
\end{equation}
with the first inequality holding because any line generated by the set $T$ must pass through $p$, and hence contributes to its degree in the stage.

Now, for the moment, exclude $F$ from our collection of families and consider all other families of lines along with the points of $P_0$. Suppose $s < 4$.  Hence, $n_k-1 < 4d$ and so $n_k < 4d+1$, i.e., $n_k \le 4d$. It follows that $$\left(\frac{n_k}{d}\right)^{1/2} \le 2.$$  Note that $d_{k+1}(p) \ge \delta_k$ for all $p \in P_{k+1}$ and so $d_{k+1}(p) \ge 2c \delta_k$ holds true for $c = \frac{1}{2}$.  Hence, $$d_{k+1}(p) \ge c \delta_k \left(\frac{n_k}{d}\right)^{1/2}$$ for some positive real constant $c$, as desired.  Now suppose that $s \ge 4$.  Since we also know that $\delta_k \ge 3$, i.e., $\delta_k-1 \ge 2$ for all $k \in \N$, we can apply Theorem 2 to this configuration with $F_1 = R$, $N = s$, and $k = \delta_k - 1$.  It follows that $$|P_0| \ge c_1(\delta_k - 1)^2\left(\frac{n_k-1}{d}\right)^{1/2}$$ for some positive constant $c_1$. Now, note that $\delta_k \ge 2$ and $n_k \ge 4$, which implies that $\delta_k - 1 \ge \frac{1}{2}\delta_k$ and $n_k - 1 \ge \frac{3}{4}n_k$. Combining this with the previous equation, we obtain
\begin{equation}
|P_0| \ge c_1\left(\frac{\delta_k}{2}\right)^2 \left(\frac{3n_k}{4d} \right)^{1/2} \ge c_2 {\delta_k}^2 \left(\frac{n_k}{d}\right)^{1/2} \label{4}
\end{equation}
for some positive constant $c_2$. If we combine (3) and (4) we get
\begin{equation}
d_{k+1}(p) \ge c_2 \delta_k \left(\frac{n_k}{d}\right)^{1/2}, \label{5}
\end{equation}
as desired.  This completes the proof.
\end{proof}

Note that $|P_0| \le n_{k+1}$ and so $$n_{k+1} \ge c {\delta_k}^2 \left( \frac{n_k}{d} \right)^{1/2}$$ must hold for any point $p \in P_k$ with $d_k(p)=d$ and some positive constant $c$.  In particular, it must hold for $p \in P_k$ chosen with $d_k(p) = \delta_k$. In this case
\begin{equation}
n_{k+1} \ge c {\delta_k}^2 \left(\frac{n_k} {\delta_k}\right)^{1/2} = c {\delta_k}^{3/2}{n_k}^{1/2}. \label{6}
\end{equation}
Now we are able to provide an improved lower bound on the minimum degree, which will be used to improve the lower bound on $n_k$.

\begin{lemma} Given any $k \in \N$, $\epsilon \ge 0$, and any positive real constant $c_1$ such that $\delta_k \ge c_1{n_k}^{\epsilon}$, there exists some positive real constant $c_2$ such that $$\delta_{k+1} \geq c_2 {n_k}^{\left(\frac{1 + 2 \epsilon}{3}\right)}.$$
\end{lemma}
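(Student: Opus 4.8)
The plan is to combine two competing lower bounds on $d_{k+1}(p)$ for a point $p \in P_k$ and then optimize over the degree of $p$. Fix $p \in P_k$ and set $d = d_k(p)$. On one hand, since lines are never removed during the process, $d_{k+1}(p) \ge d_k(p) = d$. On the other hand, the preceding degree-growth lemma gives $d_{k+1}(p) \ge c\,\delta_k (n_k/d)^{1/2}$ for some positive constant $c$. The first bound increases with $d$ while the second decreases with $d$, so neither alone suffices for all $p$; the key observation is that at least one of the two is always large.

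The heart of the argument is to balance these two estimates. The crossover occurs when $d = c\,\delta_k (n_k/d)^{1/2}$, i.e. when $d^{3/2} = c\,\delta_k n_k^{1/2}$, so at the threshold $d_0 := (c\,\delta_k)^{2/3} n_k^{1/3}$, a quantity of order $\delta_k^{2/3} n_k^{1/3}$. I would split into two cases. If $d \ge d_0$, the trivial bound already gives $d_{k+1}(p) \ge d \ge d_0$. If $d < d_0$, then because $(n_k/d)^{1/2}$ is decreasing in $d$ we have $d_{k+1}(p) \ge c\,\delta_k (n_k/d)^{1/2} > c\,\delta_k (n_k/d_0)^{1/2}$, and substituting the value of $d_0$ shows this too is of order $\delta_k^{2/3} n_k^{1/3}$. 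In either case one obtains a uniform bound $d_{k+1}(p) \ge c' \delta_k^{2/3} n_k^{1/3}$ for some positive constant $c'$ independent of $p$.

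Finally I would feed in the hypothesis $\delta_k \ge c_1 n_k^{\epsilon}$. Substituting into the uniform bound yields
$$
d_{k+1}(p) \ge c' (c_1 n_k^{\epsilon})^{2/3} n_k^{1/3} = c' c_1^{2/3}\, n_k^{(2\epsilon+1)/3},
$$
and since $(2\epsilon+1)/3 = (1+2\epsilon)/3$ and this holds for every relevant $p$, taking the minimum gives $\delta_{k+1} \ge c_2\, n_k^{(1+2\epsilon)/3}$ with $c_2 = c' c_1^{2/3}$, as desired.

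The step I expect to be the main obstacle is making the balancing genuinely uniform over the right set of points: the degree-growth lemma is stated for points of $P_k$, whereas $\delta_{k+1}$ is a minimum over all of $P_{k+1}$, which also contains points created during stage $k$. I would need to verify that such newly created points likewise satisfy the bound (or argue that the minimum degree at stage $k+1$ is realized at a point of $P_k$); once that is settled, tracking the constants through the two cases and confirming that $d_0$ indeed minimizes $\max\{d,\,c\delta_k(n_k/d)^{1/2}\}$ is routine.
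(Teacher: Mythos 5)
Your proposal is correct and follows essentially the same route as the paper: a two-case split on $d = d_k(p)$, using the degree-growth lemma (Lemma 7) when $d$ is small and the monotonicity bound $d_{k+1}(p) \ge d$ when $d$ is large, then taking the minimum over $p \in P_k$; the only difference is that you balance at $d_0 \approx \delta_k^{2/3} n_k^{1/3}$ and substitute the hypothesis $\delta_k \ge c_1 n_k^{\epsilon}$ afterward, whereas the paper splits directly at the threshold $n_k^{(1+2\epsilon)/3}$, which under the hypothesis is the same cutoff up to constants. Regarding the obstacle you flag at the end, note that the paper's own proof also establishes the bound only for $p \in P_k$ and passes immediately to $\delta_{k+1}$ without separately treating points of $P_{k+1} \setminus P_k$, so your argument is no less complete than the original on that point.
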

\begin{proof}
Suppose that $\delta_k \ge c_1{n_k}^{\epsilon}$ for some $k \in \N$, $\epsilon \ge 0$, and positive real constant $c_1$.  Define $\alpha \in \R$ by $$\alpha = \frac{1 + 2 \epsilon}{3}.$$ Let $p \in P_k$ with $d_k(p) = d$. There are two cases:  either $d < {n_k}^{\alpha}$ or $d \ge {n_k}^{\alpha}$.  If $d < {n_k}^{\alpha}$, then by Lemma 7 we have $$d_{k+1}(p) \ge c_0 \delta_k \left(\frac{n_k}{{n_k}^{\alpha}}\right)^{1/2} = c_0 \delta_k {n_k}^{\frac{1-\alpha}{2}}$$ for some positive real constant $c_0$.  Since $\delta_k \ge c_1{n_k}^{\epsilon}$ and $\alpha = (1 + 2 \epsilon)/3$, we must have
\begin{align*}
d_{k+1}(p) &\ge c_0 \delta_k {n_k}^{\frac{1-\alpha}{2}} \\
&\ge c_0 c_1 {n_k}^{\epsilon}{n_k}^{\frac{1-\epsilon}{3}} \\
&= c_2 {n_k}^{\left(\frac{1 + 2 \epsilon}{3}\right)}, \\
\end{align*}
where $c_2 = c_0 c_1$.
If instead $d \ge {n_k}^{\alpha}$, then obviously we have $$d_{k+1}(p) \ge d \ge {n_k}^{\alpha} \ge c_2{n_k}^{\left(\frac{1 + 2 \epsilon}{3}\right)},$$ where $c_2 \le 1$.  So, in both cases, we have the conclusion that $$d_{k+1}(p) \ge c_2{n_k}^{\left(\frac{1 + 2 \epsilon}{3}\right)}$$ and this will hold true for any $p \in P_k$. Since the choice of $p \in P_k$ was arbitrary, we have
\begin{equation}
\delta_{k+1} \geq c_2{n_k}^{\left(\frac{1 + 2 \epsilon}{3}\right)}, \label{7}
\end{equation}
as desired.  This completes the proof.
\end{proof}

Now, we are able to obtain some numerical results from Lemma 8.  Note first that
\begin{equation}
\delta_k \ge c_2{n_{k-1}}^{1/3} \label{8}
\end{equation}
for some positive real constant $c_2$ (letting $\epsilon = 0$). Further recall that the trivial upper bound yields
\begin{equation}
n_{k-1} \ge (8n_k)^{1/4}. \label{9}
\end{equation}
Combining (8) and (9), we get that for all $k \in \N$ $$\delta_k \ge c_2{n_{k-1}}^{1/3} \ge c_2[(8n_k)^{1/4}]^{1/3} \ge c_3{n_k}^{1/12}$$ for some positive real constant $c_3$.  Now, we can apply Lemma 8 with $\epsilon = \frac{1}{12}$ for any $k \in \N$.  Since $$\frac{1 + 2(\frac{1}{12})}{3} = \frac{7}{18},$$ we get
\begin{equation}
\delta_k \ge c_4{n_{k-1}}^{7/18} \label{10}
\end{equation}
for some positive real constant $c_4$. Now, we combine (10) with (9), to obtain that for all $k \in \N$ $$\delta_k \ge c_4{n_{k-1}}^{7/18} \ge c_4[(8n_k)^{1/4}]^{7/18} \ge c_5{n_k}^{7/72}$$ for some positive real constant $c_5$.  This process can be iterated and the limiting value of $\epsilon > 0$ is found by setting $$\epsilon = \frac{1 + 2 \epsilon}{12}$$ which implies that $$\epsilon = 0.1 + o(1).$$  Now, using Lemma 8 ($\epsilon = 0.1 + o(1)$) with (6), we obtain
\begin{align*}
n_{k+1} &\ge c {\delta_k}^{3/2}{n_k}^{1/2} \\
&\ge c \left( c^{\prime} {n_{k-1}}^{\frac{1+2(0.1+o(1))}{3}} \right)^{3/2}n_k^{1/2} \\
&\ge c^{\prime \prime}{n_{k-1}}^{1.1 + o(1)}  \tag{11}
\end{align*}
for some positive constants $c$, $c^{\prime}$, and $c^{\prime \prime}$. Using (11) along with the trivial upper bound, we obtain the following theorem:

\begin{theorem}
Given $k \in \N$, there exists real positive constants $c_1$ and $c_2$ such that
\begin{equation*}
c_1 4^{1.0488^k} \le n_k \le c_2 4^{4^k}. \tag{12}
\end{equation*}
\end{theorem}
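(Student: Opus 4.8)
The plan is to treat the two inequalities of (12) separately: the upper bound falls straight out of the trivial recurrence, while the lower bound must be extracted by solving the two-step recurrence (11).

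For the upper bound I would show by induction that $n_k \le 4^{4^{k-1}}$ for all $k \in \N$. The base case is $n_1 = 4 = 4^{4^0}$, and for the inductive step the trivial bound $n_{k+1} < n_k^4/8 < n_k^4$ gives $n_{k+1} < (4^{4^{k-1}})^4 = 4^{4^k} = 4^{4^{(k+1)-1}}$. Since $4^{4^{k-1}} \le 4^{4^k}$, this already yields $n_k \le c_2 4^{4^k}$ with $c_2 = 1$; this step is entirely routine.

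For the lower bound I would start from (11), which I may assume in the rigorous form: for every $\eta > 0$ there are a constant $c(\eta) > 0$ and an index $k_0(\eta)$ such that $n_{k+1} \ge c(\eta)\,n_{k-1}^{\,1.1 - \eta}$ for all $k \ge k_0(\eta)$. The crucial numerical observation is that $\sqrt{1.1} = 1.04880\ldots$ is strictly larger than $1.0488$, so I may fix $\eta > 0$ small enough that $\beta := 1.1 - \eta$ still satisfies $\sqrt{\beta} \ge 1.0488$. Passing to $y_k := \log_4 n_k$ turns the recurrence into $y_{k+1} \ge \beta y_{k-1} + \log_4 c(\eta)$. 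To clear the additive constant I would shift by the fixed point $y^\ast := (\log_4 c(\eta))/(1-\beta)$ and set $z_k := y_k - y^\ast$; a direct computation then gives the clean inequality $z_{k+1} \ge \beta z_{k-1}$ for all $k \ge k_0(\eta)$. The base $\sqrt{\beta}$ is exactly the relevant root of the characteristic relation $\lambda^2 = \beta$ of this two-step recurrence, which is why $\sqrt{1.1} \approx 1.0488$ appears.

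Since Proposition 1 forces $n_k \ge k+3 \to \infty$, we have $y_k \to \infty$ and hence $z_k \to \infty$, so I can pass to an index beyond which $z_k > 0$. Iterating $z_{k+1} \ge \beta z_{k-1}$ separately along the even- and odd-indexed subsequences shows that each grows by a factor $\beta$ every two steps, whence $z_k \ge C(\sqrt{\beta})^k$ for some constant $C > 0$ and all large $k$. Because $\sqrt{\beta} > 1.0488$ strictly, the ratio $(1.0488/\sqrt{\beta})^k \to 0$, so $C(\sqrt{\beta})^k \ge 1.0488^k$ for all sufficiently large $k$; therefore $y_k \ge 1.0488^k + y^\ast$, i.e. $n_k \ge 4^{y^\ast}\,4^{1.0488^k}$, and I would absorb the finitely many remaining indices by shrinking the leading constant to obtain $n_k \ge c_1 4^{1.0488^k}$ for all $k \in \N$. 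The main obstacle is precisely this final bookkeeping: the two-step recurrence pins the growth base down only as $\sqrt{\beta}$ with an undetermined leading coefficient $C$ that may well be less than $1$, so it is essential to exploit the strict gap $\sqrt{1.1} > 1.0488$ to convert $C(\sqrt{\beta})^k$ into a clean $1.0488^k$; controlling the sign of $\log_4 c(\eta)$ through the fixed-point shift is the secondary point requiring care.
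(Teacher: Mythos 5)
Your proposal is correct and follows essentially the same route as the paper: the upper bound comes from iterating the trivial recurrence $n_{k+1} \le n_k^4/8$, and the lower bound comes from iterating the two-step recurrence (11) separately along even- and odd-indexed subsequences, so that the effective per-step growth base is $\sqrt{1.1 - \eta} > 1.0488$ (the paper's ``taking square roots''). Your log-substitution and fixed-point shift merely make rigorous the constants $a_1, a_3$ that the paper sweeps into its sketch, so the two arguments are the same in substance.
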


\begin{proof}
Note first that $n_1 = 4$ and $n_2 = 7$.  From repeated use of (11) we get that there exist real positive constants $a_1,a_2,a_3,a_4$ such that $$a_1 4^{(1.1+o(1))^k} \le n_{2k+1} \le a_2 4^{4^{2k+1}}$$ and $$a_3 7^{(1.1+o(1))^{k-1}} \le n_{2k} \le a_4 4^{4^{2k}}.$$
Taking square roots, it follows that there exist real positive constants $c_1$ and $c_2$ such that $$c_1 4^{1.0488^k} \le n_k \le c_2 4^{4^k},$$ as desired.
\end{proof}

Theorem 9 shows that the growth of $n_k$ is indeed doubly-exponential, as the easy upper bound suggests.  However, a considerable gap still remains between the exponents.  While we have no rigorous argument providing improvements of either bound, computational results and heuristic reasoning suggest that the actual growth rate of $n_k$ is closer to the stated upper bound.

\bibliographystyle{plain}
\bibliography{myrefs}
\end{document}